\newtheorem{theorem}{Theorem}[section]
\newtheorem{prop}[theorem]{Proposition}
\newtheorem{cor}[theorem]{Corollary}
\theoremstyle{definition}
\theoremstyle{remark}
\newtheorem{remark}[theorem]{Remark}
\numberwithin{equation}{section}
\newcommand{\NN}{{\mathbb N}}
\newcommand{\QQ}{{\mathbb Q}}
\newcommand{\RR}{{\mathbb R}}
\newcommand{\CC}{{\mathbb C}}
\newcommand{\out}[1]{\ }
\let\PSH=\psh
\let\cal=\mathcal
\renewcommand{\phi}{\varphi}
\begin{document}

\title[An equality on Monge-Amp\`re measures]{An equality on Monge-Ampere measures}

\author[M. El Kadiri]{Mohamed El Kadiri}
\address{University of Mohammed V
\\Department of Mathematics,
\\Faculty of Science
\\P.B. 1014, Rabat
\\Morocco}
\email{elkadiri@fsr.ac.ma}



\subjclass[2010]{31C10, 32U05, 32U15.}
\keywords{Plurisubharmonic function, Plurifine topology, Plurifinely open set,
Monge-Amp\`ere operator, Monge-Amp\`ere measure.}

\begin{abstract} Let $u$ and $v$ be two plurisubharmonic functions in the
domain of definition of the Monge-Amp\`ere operator on
a domain $\Omega\subset \CC^n$. We prove that if $u=v$ on a plurifinely
open set $U\subset \Omega$ that is Borel measurable, then $(dd^cu)^n|_U=(dd^cv)^n|_U$.
This result was proved by Bedford and Taylor in \cite{BT} in the case where
$u$ and $v$ are locally bounded, and by El Kadiri and Wiegerinck \cite{EKW} when
$u$ and $v$ are finite, and by Hai and Hiep in \cite{HH} when $U$ is of the form
$U=\bigcup_{j=1}^m\{\varphi_j>\psi_j\}$, where $\varphi_j$, $\psi_j$, $j=1,...,m$, are
plurisubharmonic functions on $\Omega$.
\end{abstract}
\maketitle

\section{Introduction}

Let $u$ and $v$ be two locally bounded plurisubharmonic (psh in abbreviated form) functions on a domain $\Omega\subset \CC^n$
and $U$ a plurifinely open set $\subset \Omega$. In  \cite{BT} Bedford and Taylor
proved that if $u=v$ on $U$  then the restrictions of the Monge-Amp\`ere measures $(dd^cu)^n$
and $(dd^cv)^n$ to $U$ are equal (see \cite[Corollary 4.3]{BT}). El Kadiri and Wiegerinck
showed in \cite{EKW} that this result is true when $u$ and $v$ are two psh functions in the general domain
of definition of the Monge-Amp\`ere operator, finite on $U$,
and they extended it to the setting of $\cal F$-plurisubharmonic functions on $U$.
For the latter notion see \cite{EK} and \cite{EKFW}. Hai and Hiep  extended this result to the large Cegrell class of
plurisubharmonic functions on a hyperconvex domain $\Omega$ and a plurifine
open subset $U$ of $\Omega$ of the form $U=\bigcup_{j=1}^m\{\varphi>\psi_j\}$,
where $\varphi_j$ and  $\psi_j$ ($1\leq i, j\leq m$) are plurisubharmonic functions in the
Cegrell class $\cal E(\Omega)$ (see \cite[Theorem 1.1]{HH}).
Recall here that the plurifine topology on an open set in $\CC^n$ is
the coarsest topology on $\Omega$ that makes all plurisubharmonic functions
on $\Omega$ continuous. The plurifine topology has been investigated by many authors,
namely, Bedford and Taylor, El Marzguioui, Fuglede and Wiegerinck, see \cite{BT},
\cite{EK}, \cite{EKFW} and \cite{EMW}.

Our main purpose in this paper is to extend the above results
to any plurifinely open set that is Borel measurable. Indeed, we shall prove that if $u$ and
$v$ are two plusubharmonic functions in the Cegrell class
$\cal E(\Omega)$, where $\Omega$ is a bounded hyperconvex domain of $\CC^n$,
are equal on  a plurifinely open set $U\subset \Omega$ that is Borel
measurable, then $(dd^cu)^n|_U=(dd^cv)^n|_U$. This implies that this result is true when $\Omega$
is a general open subset of $\CC^n$ and $\cal E(\Omega)$ is replaced by the general domain of
definition $\cal D$ of the Monge-Amp\`ere operator on $\Omega$, in the sense of Blocki \cite{Bl}.

\section{The cegrell classes}
Let $\Omega$ be a bounded hyperconvex domain in $\CC^n$. From \cite{Ce1} and \cite{Ce2} we recall
the following subclasses of $\PSH_-(\Omega)$, the cone of nonpositive plurisubharmonic functions
on $\Omega$:

$${\cal E}_0={\cal E}_0(\Omega)=\{\varphi\in \PSH_-(\Omega): \lim_{z\to \partial \Omega}\varphi(z)=0, \ \int_\Omega (dd^c\varphi)^n<\infty\},$$

$${\cal F}={\cal F}(\Omega)=\{\varphi\in \PSH_-(\Omega): \exists \ \cal E_0\ni \varphi_j\searrow \varphi, \
\sup_j\int_\Omega (dd^c\varphi_j)^n<\infty\},$$

and
\begin{equation*}
\begin{split}
\cal E=\cal E(\Omega)=\{\varphi \in \PSH_-(\Omega): \forall z_0\in \Omega, \exists
\text{ a neighborhood } \omega \ni z_0,&\\
{\cal E}_0\ni \varphi_j \searrow \varphi \text{ on } \omega, \ \sup_j \int_\Omega(dd^c\varphi_j)^n<\infty \}.
\end{split}
\end{equation*}

As in \cite{Ce2}, we note that if $u\in \PSH_-(\Omega)$ then $u\in \cal E(\Omega)$ if and only
if for every $\omega\Subset \Omega$, there is $v\in \cal F(\Omega)$ such that $v\ge u$ and
$v=u$ on $\omega$. On the other hand we have $\PSH_-(\Omega)\cap L^\infty_{loc}(\Omega)\subset \cal E(\Omega)$.
The classical Monge-Amp\`ere operator on $\PSH_-(\Omega)\cap L^\infty_{loc}(\Omega)$ can be extended uniquely to
the class $\cal E(\Omega)$, the extended operator is still denoted by $(dd^c\cdot)^n$.
According to Theorem 4.5 from \cite{Ce1}, the class $\cal E$ is the biggest class
$\cal K\subset \PSH_-(\Omega)$ satisfying the following conditions:

(1) If $u\in \cal K$, $v\in \PSH_-(\Omega)$ then $\max(u,v)\in \cal K$.

(2) If $u\in \cal K$, $\varphi_j\in \PSH_-(\Omega)\cap L^\infty_{loc}(\Omega)$, $\varphi_j\searrow u$, $j\to +\infty$,
then $((dd^c\varphi_j)^n)$ is weak*-convergent.

We also recall, following Blocki, cf \cite{Bl}, that the general domain of definition $\cal D$ of
the Monge-Amp\`ere operator on a domain $\Omega$ of $\CC^n$ consists of plurisubharmonic functions $u$
on $\Omega$ for which there is a nonnegative (Radon) measure $\mu$ on $\Omega$ such that for any decreasing sequence
$(u_j)$ of locally bounded plurisubharmonic functions on $\Omega$, the sequence of measures $(dd^cu_j)^n$ is weakly*-convergent
to $\mu$. The measure $\mu$ is denoted by $(dd^cu)^n$ and called the Monge-Amp\`ere of (or associated with) $u$.
When $\Omega$ is bounded and hyperconvex then $\cal D\cap\PSH_-(\Omega)$ coincides with the class $\cal F=\cal F(\Omega)$, cf. \cite{Bl}.

\section{Equality between the Monge-Amp\`ere measures on plurifine open sets}

The following theorem was proved by Hai and Hiep in \cite{HH}:

\begin{theorem}[{\cite[Theorem 1.1]{HH}}]\label{thm1.1}
Let $\Omega$ be a bounded hyperconvex domain in $\CC^n$ and
$\varphi_1,...,\varphi_m$, $\psi_1,...,\psi_m$ are plurusubharmonic functions
on $\Omega$. Let $U=\{\varphi_1>\psi_1\}\cap ...\cap \{\phi_m > \psi_m\}$. Assume that
$u,v \in \cal E(\Omega)$. If $u=v$ on $U$ then $(dd^cu)^n|_U=(dd^cv)^n|_U$.
\end{theorem}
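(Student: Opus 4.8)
The plan is to prove Theorem~\ref{thm1.1} by reducing first to a single superlevel set, then to the class $\mathcal F(\Omega)$, then to split $U$ into a finite part and a pluripolar part; the pluripolar part will be the main obstacle. \emph{Reduction to $m=1$.} I would start from the elementary identity
\[
\bigcap_{j=1}^m\{\varphi_j>\psi_j\}=\{\Phi>\Psi\},\qquad
\Phi:=\sum_{j=1}^m\varphi_j,\quad
\Psi:=\max_{1\le k\le m}\Bigl(\psi_k+\sum_{j\ne k}\varphi_j\Bigr),
\]
which one checks by observing that on the left-hand set each $\varphi_j$, hence $\Phi$, is finite, so that the $\sum_{j\ne k}\varphi_j$ may be subtracted freely. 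Since $\Phi$ is a finite sum and $\Psi$ a finite maximum of plurisubharmonic functions, $\Phi,\Psi\in\PSH(\Omega)$, and we may assume $U=\{\varphi>\psi\}$ with $\varphi,\psi\in\PSH(\Omega)$ and, after replacing $\varphi$ by $\max(\varphi,\psi)$, that $\varphi\ge\psi$. Note that $U$ is plurifinely open, since $\varphi,\psi$ are plurifinely continuous and $\{\varphi>\psi\}=\bigcup_{q\in\QQ}(\{\varphi>q\}\cap\{\psi<q\})$.

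\emph{Reduction to $\mathcal F(\Omega)$.} Fix $\omega\Subset\Omega$. By the characterisation of $\mathcal E$ in Section~2 there are $\tilde u,\tilde v\in\mathcal F(\Omega)$ with $\tilde u\ge u$, $\tilde v\ge v$ and $\tilde u=u$, $\tilde v=v$ on $\omega$. Since the Monge-Amp\`ere operator on $\mathcal E$ is local for the Euclidean topology, $(dd^c\tilde u)^n=(dd^cu)^n$ and $(dd^c\tilde v)^n=(dd^cv)^n$ on $\omega$, so it suffices to prove $(dd^c\tilde u)^n=(dd^c\tilde v)^n$ on $\omega\cap U$. Writing the ball $\omega$ as $\{0>\rho\}$ with $\rho$ a smooth defining plurisubharmonic function and applying the identity above once more, $\omega\cap U$ is again of the form $\{\varphi'>\psi'\}$ with $\varphi',\psi'\in\PSH$, on which $\tilde u=\tilde v$. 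Thus we are reduced to the case $u,v\in\mathcal F(\Omega)$, $U=\{\varphi>\psi\}$ plurifinely open, $u=v$ on $U$, and the goal is $(dd^cu)^n|_U=(dd^cv)^n|_U$.

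\emph{The finite part.} Put $U':=U\cap\{u>-\infty\}=U\cap\{v>-\infty\}$ (these agree because $u=v$ on $U$) and $P:=U\setminus U'=U\cap\{u=-\infty\}$, a pluripolar set. On the plurifinely open set $U'$ the functions $u,v$ are finite and equal, so the theorem of El Kadiri and Wiegerinck \cite{EKW} applies and gives $(dd^cu)^n|_{U'}=(dd^cv)^n|_{U'}$. Alternatively one argues by truncation: $u_k:=\max(u,-k)$, $v_k:=\max(v,-k)$ are bounded and equal on $U$, and $u=u_k$ on the plurifinely open set $\{u>-k\}$, so two applications of Bedford--Taylor \cite{BT}, together with locality of $(dd^c\cdot)^n$ on $\{u>-k\}$, give $\mathbf 1_{U\cap\{u>-k\}}(dd^cu)^n=\mathbf 1_{U\cap\{u>-k\}}(dd^cv)^n$, and one lets $k\to\infty$ by monotone convergence. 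In either case there remains the equality $(dd^cu)^n|_P=(dd^cv)^n|_P$.

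\emph{The pluripolar part.} This is the heart of the matter and cannot be formal, since for $u\in\mathcal F(\Omega)$ the measure $(dd^cu)^n$ may genuinely charge $\{u=-\infty\}$ (think of a Green function with a pole), and no maximum-type modification of $u$ helps, because $u$, $v$ and $\max(u,v)$ all equal $-\infty$ on $P$. Here one must exploit the special form $U=\{\varphi>\psi\}$ rather than its mere plurifine openness. My plan would be to write $U=\bigcup_{t>0}\{\varphi>\psi+t\}$, approximate $\psi$ from above by continuous plurisubharmonic functions $\psi^{(\ell)}\searrow\psi$, and use that the Euclidean closure of each $\{\varphi>\psi^{(\ell)}+t\}$ is contained in $\{\varphi\ge\psi\}$, hence in $U$; passing to the limit in $t$ and $\ell$ should then be controlled by quasicontinuity of $\varphi,\psi$ and capacity (Xing-type) estimates for the error concentrated near $\{\varphi=\psi\}$, together with the known structure of the part of $(dd^cu)^n$ carried by $\{u=-\infty\}$ (generalized Lelong numbers). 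A second approach worth trying is to establish first the first-order statement $dd^cu=dd^cv$ as positive currents on $U$, and then to deduce from the convergence theorems for $(dd^c\cdot)^n$ on $\mathcal E$ that the full Monge-Amp\`ere measures agree on $U$, including on $P$. Combining the equality on $U'$ with the equality on $P$ yields $(dd^cu)^n|_U=(dd^cv)^n|_U$, which is Theorem~\ref{thm1.1}.
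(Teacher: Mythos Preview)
Your proposal has a genuine gap: the pluripolar part $P=U\cap\{u=-\infty\}$ is not actually handled. You correctly flag it as ``the heart of the matter,'' but the two plans you sketch---quasicontinuity plus Xing-type capacity estimates, or bootstrapping from a first-order equality $dd^cu=dd^cv$---remain heuristics, and neither is close to a proof. The difficulty is real: for $u\in\mathcal F(\Omega)$ the measure $(dd^cu)^n$ can charge $\{u=-\infty\}$, and nothing in your argument ties $(dd^cu)^n|_P$ to $(dd^cv)^n|_P$.

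The paper does not itself prove Theorem~\ref{thm1.1}; it is cited from Hai--Hiep \cite{HH}. But the paper does prove the special case $U=\{\varphi>c\}$ (Theorem~\ref{thm1.2}) by adapting the method of \cite{HH}, and that proof exhibits exactly the idea you are missing. One truncates as you do, $u_k=\max(u,-k)$, $v_k=\max(v,-k)$, and Bedford--Taylor gives $(dd^cu_k)^n=(dd^cv_k)^n$ on $U$. The key step is then to multiply by the nonnegative plurisubharmonic function $h:=\max(\varphi,c)-c$, which vanishes precisely on $\Omega\setminus U$: the equality becomes $h\,(dd^cu_k)^n=h\,(dd^cv_k)^n$ \emph{on all of $\Omega$}. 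One now lets $k\to\infty$ using Hiep's convergence result \cite[Corollary~3.2]{H}, which yields $h\,(dd^cu)^n=h\,(dd^cv)^n$; since $h>0$ on $U$, the conclusion follows, pluripolar part included. No separate analysis of $P$ is needed.

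Your reduction to $m=1$ via $\Phi=\sum_j\varphi_j$ and $\Psi=\max_k\bigl(\psi_k+\sum_{j\ne k}\varphi_j\bigr)$ is correct and neat, and the localisation to $\mathcal F(\Omega)$ is standard; those parts are fine. What you lack is the multiplier-plus-convergence device, which replaces your unresolved pluripolar argument by a single appeal to \cite{H}.
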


To prove our general result we only need the following weaker particular case of Theorem \ref{thm1.1} where $m=1$:

\begin{theorem}\label{thm1.2}
Let $\Omega$ be a bounded hyperconvex domain in $\CC^n$ and
$\varphi$ a plurusubharmonic function
on $\Omega$. Let $U=\{\varphi> c\}$ where $c$ is a real constant.
Assume that
$u,v \in \cal E(\Omega)$. If $u=v$ on $U$ then $(dd^cu)^n|_{U}=(dd^cv)^n|_{U}$.
\end{theorem}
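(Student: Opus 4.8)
The plan is to deduce the statement from Theorem \ref{thm1.1}, since the set $U=\{\varphi>c\}$ is almost of the required form: it is a single set $\{\varphi-c>0\}=\{\varphi>c\}$, and writing $\psi\equiv c$ (a constant, hence plurisubharmonic) we have $U=\{\varphi>\psi\}$, which is exactly the $m=1$ case of Theorem \ref{thm1.1}. So in principle this theorem is an immediate corollary. However, since the point of isolating Theorem \ref{thm1.2} is presumably to have a self-contained and elementary argument that avoids citing the full machinery of \cite{HH}, I would give a direct proof along the following lines.

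First I would reduce to the case where $u$ and $v$ are both bounded below on a neighborhood of the relevant compact sets, using the standard Cegrell-class approximation: for $u,v\in\cal E(\Omega)$, one may write $u=\lim_j u_j$, $v=\lim_j v_j$ with $u_j=\max(u,-j)$, $v_j=\max(v,-j)$ decreasing to $u$, $v$; the Monge-Ampère measures $(dd^cu_j)^n$ converge weak${}^*$ to $(dd^cu)^n$ and likewise for $v$. The difficulty is that $u_j=v_j$ need not hold on all of $U$ — only on $U\cap\{u>-j\}$. So instead I would work with $U_j:=U\cap\{\varphi>c\}\cap\{u>-j\}=U\cap\{u>-j\}$; on $U_j$ we have $u_j=u=v=v_j$. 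The set $U_j$ is again plurifinely open (an intersection of two plurifine-open sets). Then I would invoke the locally-bounded case, i.e. \cite[Corollary 4.3]{BT}: since $u_j$ and $v_j$ are locally bounded psh functions agreeing on the plurifinely open set $U_j$, we get $(dd^cu_j)^n|_{U_j}=(dd^cv_j)^n|_{U_j}$.

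The main obstacle — and the step I expect to require the most care — is passing to the limit $j\to\infty$ to recover $(dd^cu)^n|_U=(dd^cv)^n|_U$ from the equalities $(dd^cu_j)^n|_{U_j}=(dd^cv_j)^n|_{U_j}$. Here one has to handle two things simultaneously: the weak${}^*$ convergence of the measures (which only controls integrals against continuous test functions, not restrictions to plurifinely open sets), and the fact that the sets $U_j$ are increasing but may differ from $U$. For the set issue, note $\bigcup_j U_j=U\cap\{u>-\infty\}$, and $\{u=-\infty\}$ is pluripolar, hence has $(dd^cu)^n$-measure and (by the same reasoning applied to $v$, using $u=v$ on $U$ off a pluripolar set) is negligible for both measures on $U$; so it suffices to prove equality on each $U_j$. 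For the convergence issue, I would use the by-now standard fact (Cegrell, Xing) that for $\cal E$-class functions the convergence $(dd^cu_j)^n\to(dd^cu)^n$ holds in a stronger sense — e.g. that $\chi (dd^cu_j)^n\to\chi(dd^cu)^n$ weak${}^*$ for $\chi$ the characteristic function of a plurifinely open set on which all the $u_j$ coincide with $u$, or alternatively work with a test function $\chi\,h$ where $h\in C_0(\Omega)$ and $\chi=\mathbf 1_{U_j}$, justifying the interchange by the plurifine quasi-continuity of psh functions and the local boundedness on $U_j$. Combining, for any $h\in C_0(\Omega)$ with $\operatorname{supp}h$ contained in a plurifinely open set where $u_j\equiv u$ and $v_j\equiv v$, one gets $\int_{U_j}h\,(dd^cu)^n=\lim_j\int_{U_j}h\,(dd^cu_j)^n=\lim_j\int_{U_j}h\,(dd^cv_j)^n=\int_{U_j}h\,(dd^cv)^n$, and a monotone-class / regularity argument upgrades this to equality of the restricted measures $(dd^cu)^n|_{U_j}=(dd^cv)^n|_{U_j}$, whence the result after letting $j\to\infty$.
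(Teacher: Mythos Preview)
Your opening observation is correct: Theorem~\ref{thm1.2} is literally the case $m=1$, $\psi_1\equiv c$ of Theorem~\ref{thm1.1}, and the paper says as much immediately before stating it. The separate proof is there only to record a short direct argument, adapted from \cite{HH}.

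In your direct argument there is first a slip: if $u=v$ on $U$ then $u_j=\max(u,-j)$ and $v_j=\max(v,-j)$ agree on \emph{all} of $U$, not merely on $U\cap\{u>-j\}$. For $z\in U$ one has $u(z)=v(z)$ (possibly $-\infty$), hence $\max(u(z),-j)=\max(v(z),-j)$. So the detour through the sets $U_j$ is unnecessary; the paper applies \cite[Corollary~4.3]{BT} directly on $U$ to obtain $(dd^cu_j)^n|_U=(dd^cv_j)^n|_U$ for every $j$.

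The substantive gap is your passage to the limit. Weak${}^*$ convergence of $(dd^cu_j)^n$ to $(dd^cu)^n$ does not by itself let you restrict to a plurifinely open set, and the ``standard fact'' you invoke---that $\chi(dd^cu_j)^n\to\chi(dd^cu)^n$ for $\chi$ the indicator of such a set---is not something one can simply cite; your sketch here is not a proof. The paper bypasses this with a concrete device that exploits the special form of $U$: the bounded psh function $\max(\varphi,c)-c$ vanishes exactly off $U$, so from $(dd^cu_j)^n|_U=(dd^cv_j)^n|_U$ one gets the global equality
\[
(\max(\varphi,c)-c)\,(dd^cu_j)^n=(\max(\varphi,c)-c)\,(dd^cv_j)^n \quad\text{on }\Omega.
\]
Because $\max(\varphi,c)-c$ is bounded and plurisubharmonic (hence quasi\-continuous), Hiep's convergence result \cite[Corollary~3.2]{H} allows one to let $j\to\infty$ in this weighted equality, giving $(\max(\varphi,c)-c)\,(dd^cu)^n=(\max(\varphi,c)-c)\,(dd^cv)^n$. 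Since the weight is strictly positive on $U=\{\varphi>c\}$, the conclusion follows. This ``multiply by a psh weight that cuts out $U$'' trick is the idea your proposal is missing, and it is precisely what makes the $m=1$ case clean enough to write down in a few lines.
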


\begin{proof}
We adapt the proof of Theorem \ref{thm1.1} given in \cite{HH} to our case.
For each $k\in \NN^*$, set $u_k=\max\{u,-k\}$ and $v_k=\max\{v,-k\}$.
Then $u_k,v_k\in \PSH(\Omega)\cap L_{loc}^\infty(\Omega)$, $u_k\searrow u$, $v_k\searrow v$ as $k\to \infty$.
On the other hand, from the equality $u=v$  on $U$, it follows that $u_k=v_k$ on $U$ and hence
$$(dd^cu_k)^n|_U=(dd^cv_k)^n|_U$$
according to Corollary 4.3 in \cite{BT}. Thus
$$(\max(\varphi,c)-c)(dd^cu_k)^n=(\max(\varphi, c)-c)(dd^cv_k)^n$$
for all $k\geq 1$.

Letting $k\to +\infty$, we deduce by Corollary 3.2 in \cite{H} that
$$(\max(\varphi,c)-c)(dd^cu)^n=(\max(\varphi,c)-c))(dd^cv)^n.$$
for all $c\in \QQ$. But
$$\max(\varphi,c)-c=\varphi-c> 0$$
on $\{\varphi >c\}$, so that $(dd^cu)^n=(dd^cv)^n$ on $\{\varphi > c\}$, and the desired conclusion follows.
\end{proof}

\begin{remark}
Because every domain $\Omega\subset \CC^n$ can be written as $\Omega=\bigcup_{j\in \NN}\Omega_j$
where $\Omega_j$, $j=1,2,...$, are open balls, and hence hyperconvex, it follows that
Theorem \ref{thm1.1} is true for any domain $\Omega\subset \CC^n$ and $u,v\in \cal D$, the
domain of definition of Monge-Amp\`ere operator.
\end{remark}

For a set $E\subset \Omega$, we define
$$u_E(z)=\sup \{v(z): v\in \PSH_{-}(\Omega), \ v\le -1 \text{ on } E\}$$
and $u_E^*$ the upper semicontinuous regularization of $u_E$, that is the function
defined on $\Omega$ by
$$u_E^*(z)=\limsup_{\zeta\to z}u_E(\zeta)$$
for every $z\in \Omega$, (the relative extremal function of $E$, see \cite[p. 158]{Kl}).

\begin{prop}\label{prop1.1}
Let $\Omega$ a domain of $\CC^n$. If a subset $A$ of $\Omega$ is plurithin at $z\in \CC^n$, then there is an open
set $\sigma_z$ containing $z$ such that
$u_{(A\setminus \{z\})\cap \sigma_z}^*(z)>-1$.
\end{prop}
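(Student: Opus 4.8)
The plan is to reduce the statement to the classical characterization of plurithinness. Recall that a set $A$ is plurithin at $z$ precisely when either $z\notin\overline{A\setminus\{z\}}$, or else there is a plurisubharmonic function $w$ defined on a neighborhood of $z$ with
$$\limsup_{\zeta\to z,\ \zeta\in A\setminus\{z\}} w(\zeta)<w(z).$$
In the first case the conclusion is immediate: pick an open ball $\sigma_z\ni z$ disjoint from $A\setminus\{z\}$, so that $(A\setminus\{z\})\cap\sigma_z=\varnothing$ and hence $u_{\varnothing}^*\equiv 0>-1$ on $\Omega$. So from now on I assume $z\in\overline{A\setminus\{z\}}$ and fix such a $w$.

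First I would normalize: after subtracting a constant and rescaling $w$ by a positive factor we may assume $w(z)=0$ while $\limsup_{\zeta\to z,\ \zeta\in A\setminus\{z\}} w(\zeta)<-2$. Then choose an open ball $\sigma_z$ centered at $z$, small enough that $w$ is defined and plurisubharmonic on a neighborhood of $\overline{\sigma_z}$, that $w<-2$ on $(A\setminus\{z\})\cap\sigma_z$, and (shrinking if necessary) that $\inf_{\overline{\sigma_z}} w > -\infty$; write $-M=\inf_{\overline{\sigma_z}} w$ with $M\ge 2$. Now set, on $\Omega$,
$$g=\max\!\Bigl(\tfrac{1}{M}\,w\bigl|_{\sigma_z},\,-1\Bigr)\quad\text{extended by }-1\text{ outside }\sigma_z.$$
Since $\tfrac1M w\ge -1$ forces the max to equal $\tfrac1M w$ near $\partial\sigma_z$ where $w$ is bounded below by $-M$ — more precisely, on the set $\{\tfrac1M w>-1\}\cap\sigma_z$ the function $g$ agrees with the psh function $\tfrac1M w$, and outside that set $g\equiv -1$ — the gluing lemma for plurisubharmonic functions shows $g\in\PSH_-(\Omega)$. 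By construction $g\le -1$ on $(A\setminus\{z\})\cap\sigma_z$ (there $\tfrac1M w<-2/M\le -1$), so $g$ is an admissible competitor in the definition of $u_{(A\setminus\{z\})\cap\sigma_z}$, whence
$$u_{(A\setminus\{z\})\cap\sigma_z}(z)\ \ge\ g(z)\ =\ \max\bigl(\tfrac1M w(z),-1\bigr)\ =\ \max(0,-1)\ =\ 0.$$
Since $u_{(A\setminus\{z\})\cap\sigma_z}^*\ge u_{(A\setminus\{z\})\cap\sigma_z}$ pointwise, we get $u_{(A\setminus\{z\})\cap\sigma_z}^*(z)\ge 0>-1$, as desired.

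The only delicate point is making sure the competitor $g$ is genuinely in $\PSH_-(\Omega)$ and genuinely $\le -1$ on the relevant part of $A$; this is why $\sigma_z$ must be chosen so that $w$ is bounded below on $\overline{\sigma_z}$ (ensuring the rescaled $\tfrac1M w$ reaches $-1$ from above near $\partial\sigma_z$, so the max-gluing across $\partial\sigma_z$ produces a global psh function) and so small that the strict inequality $w<-2$ persists on $(A\setminus\{z\})\cap\sigma_z$ (so that, after dividing by $M\ge 2$, the values stay below $-1$). I expect this bookkeeping with the radius of $\sigma_z$ to be the main, though routine, obstacle; the rest is a direct application of the definition of the relative extremal function together with the standard fact that $\limsup$-type thinness is witnessed by a single psh function on a neighborhood of $z$.
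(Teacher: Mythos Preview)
Your argument has two related flaws, both stemming from the choice to divide $w$ by $M=-\inf_{\overline{\sigma_z}}w$.

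First, the gluing fails. By definition of $M$ you have $\tfrac{1}{M}w\ge -1$ on all of $\overline{\sigma_z}$, so $g=\max(\tfrac{1}{M}w,-1)=\tfrac{1}{M}w$ throughout $\sigma_z$. For the extension of $g$ by $-1$ across $\partial\sigma_z$ to be upper semicontinuous you would need $\limsup_{\zeta\to\xi,\,\zeta\in\sigma_z}\tfrac{1}{M}w(\zeta)\le-1$ at every $\xi\in\partial\sigma_z$, i.e.\ $w\equiv -M$ on $\partial\sigma_z$. There is no reason for this; typically $w$ is close to $0=w(z)$ at some boundary points, and then $g$ is not psh on $\Omega$. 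The gluing lemma requires the local function to drop \emph{below} the global one near the interface, and your normalization forces exactly the opposite inequality.

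Second, $g$ is not $\le -1$ on $(A\setminus\{z\})\cap\sigma_z$. You write ``$-2/M\le -1$'', but that means $M\le 2$, whereas you only know $M\ge 2$. Indeed, since $\tfrac{1}{M}w\ge -1$ on $\overline{\sigma_z}$, the max with $-1$ is just $\tfrac{1}{M}w\ge -1$; it equals $-1$ only where $w=-M$, not on the (generally larger) set where merely $w<-2$. So $g$ is not an admissible competitor.

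These two defects cannot be repaired simultaneously by adjusting the scaling: dividing by $2$ instead of $M$ restores the competitor inequality on $(A\setminus\{z\})\cap\sigma_z$ but destroys any hope of matching $-1$ at $\partial\sigma_z$, and vice versa. The paper sidesteps the whole issue by invoking \cite[Proposition~2.2]{BT}, which supplies a witness $u\in\PSH_-(\Omega)$ that is already global and nonpositive on $\Omega$; then no gluing is needed, and a single positive rescaling $v=-u/\alpha$ (with $u\le\alpha<u(z)\le 0$ on $(A\setminus\{z\})\cap\sigma_z$) is a legitimate competitor satisfying $v(z)>-1$.
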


\begin{proof}
The result is obvious if $z\notin \overline A$. Suppose that $z\in \overline A$ and that
$A$ is plurithin at $z$. According to \cite[Proposition 2.2]{BT},
there is a psh function $u\in \PSH_-(\Omega)$ such that
$$\limsup_{\zeta\to z, \zeta\in A, \zeta\ne z}u(\zeta)< u(z).$$
Hence, there is an open set $\sigma_z\subset \Omega$ containing $z$, and a real $\alpha< 0$ such that
$$u(\zeta)\le \alpha <u(z)$$
for every $\zeta\in (A\setminus \{z\})\cap \sigma_z$. The function $v=\frac{-1}{\alpha}u$
is psh $\le 0$ on $\Omega$ and satisfies $v(\zeta)\leq -1$
for every $\zeta \in (A\setminus \{z\})\cap \sigma_z$, so that
$$-1<v(z)\le u_{(A\setminus \{z\})\cap \sigma_z}^*(z).$$
\end{proof}

Now we can state the main result of the present article:

\begin{theorem}\label{thm3.3}
Let $\Omega$ be an hyperconvex open subset of $\CC^n$ and $U$ an $\cal F$-open subset of $\Omega$
that is Borel measurable, and
let $u$, $v$ be two psh functions in the Cegrell class $\cal E(\Omega)$. If $u=v$
on $U$, then $(dd^cu)^n|_U=(dd^cv)^n|_U$.
\end{theorem}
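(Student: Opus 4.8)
The plan is to reduce the statement about a general Borel-measurable $\cal F$-open set $U$ to the special case $U=\{\varphi>c\}$ already handled in Theorem \ref{thm1.2}. The natural strategy is to cover $U$, up to a pluripolar set, by a countable union of sets of the form $\{\varphi_j>0\}$ with $\varphi_j\in\PSH_-(\Omega)$, and then invoke Theorem \ref{thm1.2} on each piece together with the local nature of the Monge-Amp\`ere operator. The key geometric input is that the plurifine topology has a basis consisting of sets of the form $V\cap\{\varphi>0\}$ where $V$ is Euclidean open and $\varphi$ is psh; this is the standard description of $\cal F$-open sets coming from the work of Bedford--Taylor and El Marzguioui--Wiegerinck.

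First I would fix a point $z_0\in U$. Since $U$ is $\cal F$-open, its complement $\Omega\setminus U$ is plurithin at $z_0$, so by Proposition \ref{prop1.1} there is a Euclidean-open neighbourhood $\sigma_{z_0}$ and a psh function witnessing $u^*_{(\Omega\setminus U)\cap\sigma_{z_0}\setminus\{z_0\}}(z_0)>-1$; unwinding the construction in that proof, this produces a psh function $\varphi_{z_0}\le 0$ on $\Omega$ and a Euclidean neighbourhood $\omega_{z_0}\subset\sigma_{z_0}$ of $z_0$ with $\{\varphi_{z_0}>-1\}\cap\omega_{z_0}\subset U\cup\{z_0\}$, hence $\{\varphi_{z_0}>-1\}\cap\omega_{z_0}\subset U$ up to the single point $z_0$ (which is pluripolar and carries no mass for $(dd^cu)^n$ since $u\in\cal E(\Omega)$). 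Replacing $\varphi_{z_0}$ by $\varphi_{z_0}$ restricted to the ball $\omega_{z_0}$ and translating the constant, I get that a plurifine neighbourhood of $z_0$ contained in $U$ can, modulo a pluripolar set, be written as $\{\varphi_{z_0}>c_{z_0}\}$ inside a small ball $\omega_{z_0}\Subset\Omega$. On that ball $\omega_{z_0}$, which is hyperconvex, Theorem \ref{thm1.2} (applied with $\Omega$ replaced by $\omega_{z_0}$, after restricting $u,v$ there—using $\PSH_-\cap L^\infty_{loc}\subset\cal E$ and the localization property of $\cal E$ recalled in \S2) gives $(dd^cu)^n=(dd^cv)^n$ on $\{\varphi_{z_0}>c_{z_0}\}\cap\omega_{z_0}$.

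Next I would globalize: by Lindel\"of, countably many of the sets $W_j:=\{\varphi_{z_j}>c_{z_j}\}\cap\omega_{z_j}$ cover $U$ up to a pluripolar set $P=\bigcup_j\{z_j\}$. On each $W_j$ the two Monge-Amp\`ere measures agree, and since $W_j$ is (Euclidean, hence plurifine) open, this is a statement about the restrictions to $W_j$ that is compatible on overlaps; taking the union, $(dd^cu)^n$ and $(dd^cv)^n$ agree on $\bigcup_j W_j\supset U\setminus P$. Finally, because $u,v\in\cal E(\Omega)$, neither $(dd^cu)^n$ nor $(dd^cv)^n$ charges the pluripolar set $P$ (this is a standard property of the Cegrell class $\cal E$), so the measures in fact agree on all of $U$, which is the desired conclusion. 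The passage from the bounded hyperconvex case to a general open $\Omega$ with $u,v\in\cal D$ is then the exhaustion-by-balls argument already used in the Remark after Theorem \ref{thm1.2}.

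The main obstacle I anticipate is the first step: carefully extracting, from plurithinness of $\Omega\setminus U$ at $z_0$, a psh function $\varphi_{z_0}$ on a fixed ball such that the superlevel set $\{\varphi_{z_0}>c_{z_0}\}$ lies inside $U$ up to a pluripolar (indeed finite) set. Proposition \ref{prop1.1} gives exactly the right kind of object locally, but one must check that the neighbourhood $\sigma_z$ can be taken to be a ball $\Subset\Omega$ and that the exceptional point does not obstruct applying Theorem \ref{thm1.2}, i.e. that replacing $U$ by $U\cup\{z_0\}$ or $U\setminus\{z_0\}$ leaves the Monge-Amp\`ere measures unchanged. The latter uses that finite sets are pluripolar and that functions in $\cal E(\Omega)$ put no mass on pluripolar sets, so this is routine once the covering is in place; the real work is organizing the covering and the compatibility of the local equalities.
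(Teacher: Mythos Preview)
Your overall strategy---cover $U$ by sets of the form $\{\varphi>c\}$ inside small balls and invoke Theorem~\ref{thm1.2} on each---is the paper's as well, but your covering step contains two genuine gaps. First, the plurifine topology is not Lindel\"of; it only enjoys the \emph{quasi}-Lindel\"of property, so from the family $\{W_z\}_{z\in U}$ you can extract at best countably many $W_{z_j}$ with $U\setminus\bigcup_j W_{z_j}$ pluripolar (and this residual set is not the countable set of centers $\bigcup_j\{z_j\}$ you write down; incidentally, since $z_0\in U$ your inclusion $W_{z_0}\subset U\cup\{z_0\}$ is already $W_{z_0}\subset U$, so no exceptional point arises at that stage). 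Second, your proposed remedy---that Monge--Amp\`ere measures of functions in $\cal E(\Omega)$ put no mass on pluripolar sets---is false: the pluricomplex Green function $g_\Omega(\cdot,a)$ lies in $\cal F(\Omega)\subset\cal E(\Omega)$ and $(dd^c g_\Omega(\cdot,a))^n=(2\pi)^n\delta_a$. Hence the residual pluripolar set cannot be discarded in this way.

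The paper circumvents both issues by a different choice of covering sets. Rather than the witness function from the proof of Proposition~\ref{prop1.1} (which depends on the point $z$), it uses the relative extremal function $\bar u^*_{(\complement U)\cap\omega_j}$ computed on a ball $\omega_j$ taken from a fixed \emph{countable} Euclidean base. Since this function depends only on the index $j$, the family $V^{(j)}:=\{\bar u^*_{(\complement U)\cap\omega_j}>-1\}$ is countable by construction, and one has the honest inclusion $U\subset\bigcup_{j\in J} V^{(j)}$ with $J\subset\NN$---no Lindel\"of-type argument is needed. The trade-off is that each $V^{(j)}$ may leak outside $U$ by a pluripolar set $F_j$ (the extremal function equals $-1$ on $(\complement U)\cap\omega_j$ only quasi-everywhere). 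Here the paper uses \emph{plurifine continuity} of $u$ and $v$, not a non-charging claim: since $F_j$ is pluripolar, $V^{(j)}\setminus F_j$ is plurifinely dense in the plurifinely open set $V^{(j)}$, so the equality $u=v$ on $V^{(j)}\setminus F_j\subset U$ extends to all of $V^{(j)}$, after which Theorem~\ref{thm1.2} applies directly on each $V^{(j)}$.
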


\begin{proof}
Let $(\omega_j)$ be a base of the Euclidean topology on $\CC^n$ formed by open
balls relative to the usual Euclidean norm on $\CC^n$
and let $z\in U$. The set $\complement U=\Omega\setminus U$ being thin at $z$, then, according to Proposition
\ref{prop1.1}, there is an integer $j_z$ such that $u^*_{(\complement U) \cap \omega_{j_z}}(z)>-1$.
Denoting by $\bar u^*_{{\complement U}\cap \omega_{j_z}}$ the function
defined in the same manner as $u^*_{({\complement U})\cap\omega_{j_z}}$ with $\Omega$
replaced by $\omega_{j_z}$, we obviously have
$$z\in V_z:=\{\bar u^*_{({\complement U})\cap \omega_{j_z}}>-1\}\subset \omega_{j_z}$$
because
$$-1<u^*_{\complement U\cap \omega_{j_z}}(z)\leq \bar u^*_{{\complement U}\cap \omega_{j_z}}(z).$$
It is clear that  $V_z$ is a plurifinely open set.
On the set $\complement U\cap \omega_{j_z}$ we have $\bar u^*_{{\complement U}\cap \omega_{j_z}}=-1$ q.e., and hence
$V_z=U\cap V_z\cup F_z$ for some pluripolar set $F_z\subset \omega_{j_z}$.
On the other hand, we have
$$\bigcup_{z\in U} V_z=\bigcup_{z\in U}\{\bar u^*_{({\complement U})\cap \omega_{j_z}}>-1\}
=\bigcup_{j\in J}\{\bar u^*_{({\complement U})\cap \omega_j}>-1\},$$
where $J=\{j_z: z\in U\} (\subset \NN)$. For each $j\in J$, there is a point $z_j\in U$ such that
$j=j_{z_j}$, so that
$$U\subset \bigcup_{z\in U}V_z=\bigcup_{j\in J}\{\bar u^*_{({\complement U})\cap \omega_{j_{z_j}}}>-1\}
=\bigcup_{j\in J}V_{z_j}.$$

The restrictions of
the psh functions $u$ and $v$ to $\omega_{j_{z_j}}$ are equal on $V_{z_j}\setminus F_{z_j}\subset U$ since they are equal on $U$,
and therefore they are equal on $V_{z_j}$ by plurifine  continuity.
It then follows that $(dd^cu)^n|_{V_{z_j}}=(dd^cv)^n|_{V_{z_j}}$ according to Theorem \ref{thm1.1}
applied with $\Omega=\omega_{j_{z_j}}$. From this we infer that $(dd^cu)^n|_{\bigcup_j V_{z_j}}=(dd^c)^n|_{\bigcup_j V_{z_j}}$ and hence
$(dd^cu)^n|_U=(dd^c)^n|_U$ because $U\subset \bigcup_jV_{z_j}$,
and the proof is complete.
\end{proof}

\begin{remark}
In the proof of Theorem \ref{thm3.3}, we also proved that any plurifinely open
subset of a domain $\Omega\subset \CC^n$ is of the form
$\bigcup U_j\setminus P$, where $U_j$, $j=1,2,...$, is of the form $U_j=\{\zeta \in B_j: \varphi_j>-1\}$,
$\varphi_j$ is a plurisubharmonic function on an open ball $B_j$, $j=1,2,...$, and $P$ a pluripolar set.
\end{remark}

\begin{cor}\label{cor3.4}
Let $\Omega$ be an open subset of $\CC^n$, $U$ a
plurifinely open subset of $\Omega$ that is Borel measurable, and $u,v\in \cal D$, the domain of definition
of the Monge-Amp\`ere operator on $\Omega$. If $u=v$ on $U$, then $(dd^cu)^n|_U=(dd^cv)^n|_U$.
\end{cor}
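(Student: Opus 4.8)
The plan is to reduce Corollary~\ref{cor3.4} to Theorem~\ref{thm3.3} by the same localization trick used in the Remark following Theorem~\ref{thm1.1}. First I would write $\Omega=\bigcup_{k\in\NN}B_k$ as a countable union of open Euclidean balls $B_k\Subset\Omega$; each $B_k$ is a bounded hyperconvex domain, so Theorem~\ref{thm3.3} applies on $B_k$. It suffices to show $(dd^cu)^n|_{U\cap B_k}=(dd^cv)^n|_{U\cap B_k}$ for every $k$, since the sets $U\cap B_k$ cover $U$ and the Monge-Amp\`ere measures are (Radon) measures, hence determined by their restrictions to the members of a countable open cover.

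Next I would pass from $u,v\in\cal D(\Omega)$ to functions in the Cegrell class on each ball. The point is that for a locally defined decreasing sequence of locally bounded psh functions, the Monge-Amp\`ere measure $(dd^cu)^n$ restricted to $B_k$ depends only on $u|_{B_k}$; more precisely, if $w\in\cal D(B_k)\cap\PSH_-(B_k)$, then $w\in\cal F(B_k)=\cal D(B_k)\cap\PSH_-(B_k)$ by the result of Blocki recalled at the end of Section~2, and $\cal F(B_k)\subset\cal E(B_k)$. Since $u,v\in\cal D(\Omega)$ are psh, they are locally bounded above; subtracting a large constant on a slightly smaller ball $B_k'\Subset B_k$ makes them nonpositive there, and the subtracted constant does not affect the Monge-Amp\`ere measure. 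Thus, shrinking the balls slightly, we may assume $u|_{B_k},v|_{B_k}\in\cal E(B_k)$, with $(dd^cu)^n|_{B_k}$ and $(dd^cv)^n|_{B_k}$ unchanged (locality of the Monge-Amp\`ere operator for functions in $\cal D$, which follows from its definition via decreasing sequences of locally bounded psh functions).

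Then I would apply Theorem~\ref{thm3.3} on $B_k$ with the plurifinely open Borel set $U\cap B_k\subset B_k$: since $u=v$ on $U$, in particular $u|_{B_k}=v|_{B_k}$ on $U\cap B_k$, so Theorem~\ref{thm3.3} yields $(dd^cu|_{B_k})^n|_{U\cap B_k}=(dd^cv|_{B_k})^n|_{U\cap B_k}$, i.e.\ $(dd^cu)^n|_{U\cap B_k}=(dd^cv)^n|_{U\cap B_k}$. Taking the union over $k$ gives $(dd^cu)^n|_U=(dd^cv)^n|_U$, completing the proof.

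The only genuinely delicate point is the locality step: one must know that the restriction of $(dd^cu)^n$ to an open ball $B\Subset\Omega$ depends only on $u|_B$, and that replacing $u|_B$ by $u|_B-C$ on an interior subball leaves the measure unchanged there. Both facts are immediate from Blocki's defining property of $\cal D$ through decreasing sequences of locally bounded psh functions together with the corresponding well-known locality of the Bedford--Taylor operator on $\PSH\cap L^\infty_{loc}$; I would simply cite \cite{Bl} and \cite{BT} for these. Everything else is bookkeeping with a countable cover.
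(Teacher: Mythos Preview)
Your proposal is correct and follows essentially the same approach as the paper: cover $\Omega$ by a countable family of open balls $B_k$, apply Theorem~\ref{thm3.3} on each $B_k$ to get $(dd^cu)^n|_{U\cap B_k}=(dd^cv)^n|_{U\cap B_k}$, and conclude by taking the union. The paper's proof is two lines and leaves the passage from $\cal D(\Omega)$ to $\cal E(B_k)$ implicit, whereas you spell out the locality step and the subtraction of a constant to land in $\PSH_-(B_k)$; this extra detail is sound and does not change the argument.
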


\begin{proof}
We can find open balls $B_j\subset \CC^n$, $j=1, 2, ...,$ such that $\Omega=\bigcup B_j$.
According to Theorem \ref{thm3.3} we have $(dd^cu)^n|_{(U\cap B_j)}=(dd^cv)^n|_{(U\cap B_j)}$ for every $j\ge 1$
and therefore $(dd^cu)^n|_U=(dd^cv)^n|_U$.
\end{proof}

\section{A generalzation of Theorem 3.3}

Let us first recall the following result of Hai and Hiep:

\begin{prop}[{\cite[Proposition 4.1]{HH}}]\label{prop4.1}
Let $\Omega$ be an open subset of $\CC^n$, $u$ a psh function on $\Omega$ that is bounded
near the boundary of $\Omega$ and $T$ a closed positive current on $\Omega$
of bidegree $(p,p)$ ($p<n$), then
the current $dd^c(u T)$ has locally finite mass  in $\Omega$.
\end{prop}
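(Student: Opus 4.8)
The assertion is local in $\Omega$, so it is enough to bound the mass of $dd^c(uT)$ on an arbitrary compact set $K\Subset\Omega$. Since $u$ is bounded near $\partial\Omega$, we may fix a compact set $L_0\subset\Omega$ and a constant $M>0$ with $\abs u\le M$ on $\Omega\setminus L_0$. The plan is to regularise $u$ from above by the truncations $u_k=\max(u,-k)$, $k\in\NN$: these are locally bounded plurisubharmonic functions with $u_k\searrow u$, so the Bedford--Taylor--Demailly product $dd^cu_k\wedge T:=dd^c(u_kT)$ is a well-defined \emph{positive} current of bidegree $(p+1,p+1)$, and the goal is to estimate its mass on $K$ by a quantity that does not depend on $k$.

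First I would choose a cut-off $\chi\in C_c^\infty(\Omega)$ with $0\le\chi\le 1$ and $\chi\equiv 1$ on a neighbourhood of $K\cup L_0$; then $\Supp(dd^c\chi)$ is a compact subset of $\Omega\setminus L_0$, so $\abs u\le M$ there, and consequently $u_k=u$ on $\Supp(dd^c\chi)$ for every $k\ge M$. Writing $\beta=dd^c\abs z^2$ (here the hypothesis $p<n$ is used, so that $\beta^{\,n-p-1}$ makes sense), the positivity of $dd^cu_k\wedge T$ together with $\chi\equiv1$ on $K$ gives, up to a dimensional constant,
$$\bigl\Vert dd^c(u_kT)\bigr\Vert_K\ \le\ \int \chi\, dd^cu_k\wedge T\wedge\beta^{\,n-p-1}\ =\ \int u_k\, dd^c\chi\wedge T\wedge\beta^{\,n-p-1},$$
the last equality being the usual integration by parts, legitimate because $T$ and $\beta$ are closed. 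Since $\abs{u_k}\le M$ on $\Supp(dd^c\chi)$ as soon as $k\ge M$, the right-hand side is at most $C_\chi\,M\,\sigma_T\!\bigl(\Supp(dd^c\chi)\bigr)<\infty$, where $\sigma_T$ is the trace measure of $T$ and $C_\chi$ depends only on $\chi$; crucially this bound is independent of $k$.

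It remains to pass to the limit $k\to\infty$. Since $u_k\searrow u$ pointwise and $u$ is locally $\sigma_T$-integrable (which is implicit in the statement, as it is what makes $uT$ a current in the first place), dominated convergence shows that $u_kT\to uT$ weakly, whence $dd^c(u_kT)\to dd^c(uT)$ weakly as well. By lower semicontinuity of the mass under weak convergence, the mass of $dd^c(uT)$ on the interior of $K$ is at most the $k$-uniform bound obtained above; as $K\Subset\Omega$ was arbitrary, $dd^c(uT)$ has locally finite mass in $\Omega$. The step I expect to be the crux is the uniform mass estimate: it relies \emph{both} on the positivity of the truncated currents $dd^cu_k\wedge T$ (so that testing against the cut-off $\chi$ really dominates the mass) and on the freedom to place $\Supp(dd^c\chi)$ inside the region where $u$ is bounded, which is exactly where the hypothesis that $u$ is bounded near $\partial\Omega$ is spent.
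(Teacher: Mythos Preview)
The paper does not give its own proof of this proposition: it is merely \emph{recalled} from \cite[Proposition~4.1]{HH} and then used to define $dd^cu\wedge T$. So there is no argument in the paper to compare against.

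Your proof is the standard Chern--Levine--Nirenberg type estimate and is correct. The key mechanism---truncating to $u_k=\max(u,-k)$, testing the positive current $dd^cu_k\wedge T\wedge\beta^{\,n-p-1}$ against a cut-off $\chi$ whose ``active'' region $\Supp(dd^c\chi)$ is pushed into the zone where $u$ is already bounded, and then integrating by parts---is exactly how such mass bounds are obtained, and your identification of where the hypothesis ``bounded near $\partial\Omega$'' is spent is accurate. One small remark on the limiting step: you invoke local $\sigma_T$-integrability of $u$ as ``implicit in the statement''. That is a fair reading of the proposition as written (since $uT$ must be a current for $dd^c(uT)$ to make sense), but it is worth knowing that the uniform mass bound you have already obtained can in fact be used to \emph{prove} that $u\in L^1_{loc}(\sigma_T)$, so the argument does not actually rest on that implicit assumption; this is how the result is typically packaged in the literature. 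Either way, your sketch is sound.
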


Proposition \ref{prop4.1} allows us to define the current
$dd^cu\wedge T$ on $\Omega$ by putting
$$dd^cu\wedge T=dd^c(uT).$$

The following proposition is a generalization of Proposition 4.4 from \cite{HH}:

\begin{prop}\label{prop4.3}
Let $\Omega\subset \CC^n$ be an open set, $T$ a closed positive current of bidegree
$(p,p)$ on $\Omega$ $(p<n)$ and $\omega$ an open subset of $\Omega$. Assume that
$u_j$, $j=1,2,...$, and $u$ are plurisubharmonic functions bounded near the boundary of $\Omega$.
If $u_j\searrow u$ then the currents $h(\varphi_1, ..., \varphi_m)(dd^cu_j\wedge T|_{\omega}) \to h(\varphi_1, ..., \varphi_m)(dd^cu\wedge T|_{\omega})$
weakly (on $\omega$) for all $\varphi_1,...,\varphi_m\in \PSH \cap L_{loc}^\infty(\omega)$ and $h\in \cal C(\RR^m)$.
\end{prop}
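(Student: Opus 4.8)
The plan is to prove the statement in two stages: a ``base case'' in which the factor $h(\varphi_1,\dots,\varphi_m)$ is absent, and then an approximation argument by quasi-continuity that reinstates it. For the base case, Proposition~\ref{prop4.1} guarantees that $u_jT$, $uT$, and hence $dd^cu_j\wedge T=dd^c(u_jT)$ and $dd^cu\wedge T=dd^c(uT)$, are currents of locally finite mass on $\Omega$. Since $u_j\searrow u$, the currents $(u_j-u)T=u_jT-uT$ are $\ge0$, decrease in $j$, and are dominated by the locally finite-mass current $(u_1-u)T$, so dominated convergence applied to the coefficient measures gives $u_jT\to uT$ weakly; as $dd^c$ is continuous for the weak topology of currents, $dd^cu_j\wedge T\to dd^cu\wedge T$ weakly on $\Omega$, a fortiori on $\omega$. (Running the same monotone limit over $k$ with $\max(u_j,-k)\searrow u_j$ shows incidentally that each $dd^cu_j\wedge T$ is a positive current, so that, upon pairing with a fixed positive smooth compactly supported test form, we may treat $dd^cu_j\wedge T|_\omega$ and $dd^cu\wedge T|_\omega$ as positive Radon measures $\mu_j$ and $\mu$ on $\omega$; it then suffices to prove $f\mu_j\to f\mu$ weakly, where $f:=h(\varphi_1,\dots,\varphi_m)$.)

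Next I would localize: fix $\chi\in\mathcal C_c(\omega)$, put $K:=\mathrm{supp}\,\chi$, and choose $\omega'$ with $K\subset\omega'\Subset\omega$. On $K$ each $\varphi_i$ is bounded, so $(\varphi_1,\dots,\varphi_m)$ maps $K$ into a compact box of $\RR^m$ on which $h$ is uniformly continuous and bounded; moreover each $\varphi_i$, being plurisubharmonic, is quasi-continuous for the relative capacity $\capa(\cdot,\omega')$, and hence so is $f$, which is also bounded on $\omega'$. By the base case and the Banach--Steinhaus theorem the masses $\mu_j(K')$ are bounded uniformly in $j$ for $K\Subset K'\Subset\omega$ (so that weak convergence of $\mu_j$ against smooth test forms upgrades to convergence against continuous compactly supported ones). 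The technical heart of the argument is a \emph{Chern--Levine--Nirenberg estimate uniform in $j$}: a constant $C$, independent of $j$, with
\begin{equation*}
\mu_j(E)\le C\,\capa(E,\omega')\qquad\text{for every }E\Subset\omega',
\end{equation*}
and likewise for $\mu$; in particular these measures charge no pluripolar set and are uniformly absolutely continuous with respect to $\capa(\cdot,\omega')$.

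Granted this estimate, the conclusion would follow by the usual splitting. Given $\eps>0$, quasi-continuity of $f$ lets one choose an open $O\subset\omega'$ with $\capa(O,\omega')$ so small that $2\,\|f\|_{L^\infty(\omega')}\,\|\chi\|_\infty\,C\,\capa(O,\omega')<\eps$ and with $f$ continuous on $\omega'\setminus O$; a Tietze extension then produces $g\in\mathcal C_c(\omega)$ with $\|g\|_{L^\infty(\omega')}\le\|f\|_{L^\infty(\omega')}$ and $g=f$ on $K\setminus O$. Since $\chi g$ is continuous with compact support, $\int\chi g\,d\mu_j\to\int\chi g\,d\mu$, while $|\int\chi(f-g)\,d\mu_j|$ and $|\int\chi(f-g)\,d\mu|$ are each $<\eps$ by the uniform estimate because $f-g$ vanishes off $O$; hence $\limsup_j|\int\chi f\,d\mu_j-\int\chi f\,d\mu|\le2\eps$, and letting $\eps\to0$ yields $f\mu_j\to f\mu$ weakly. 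The main obstacle is precisely the uniform Chern--Levine--Nirenberg estimate: it is delicate because $u$ and the $u_j$ may be $-\infty$ at interior points of $\Omega$, so that $dd^cu_j\wedge T$ is available only through $dd^c(u_jT)$ rather than as a Bedford--Taylor product, and because the constant must be genuinely independent of $j$. I expect to handle it by localizing, using $p<n$ to keep the pluripolar set $\{u=-\infty\}$ negligible for $T$, and exploiting the order relation $u\le u_j\le u_1$ together with a comparison with the relative extremal function of $E$ to reduce matters to the classical locally bounded situation.
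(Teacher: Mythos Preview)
Your base case ($h\equiv 1$) is fine, but the quasi-continuity/CLN step has a genuine gap that cannot be filled in the stated generality. The uniform estimate $\mu_j(E)\le C\,\capa(E,\omega')$---and even the weaker assertion that $\mu_j$, $\mu$ charge no pluripolar set---is false for arbitrary closed positive $T$. Take $n=2$, $\Omega=B(0,2)$, $T=[L]$ the current of integration on the line $L=\{z_2=0\}$, and $u_j=u=\log|z|$. Then $u$ is psh and bounded near $\partial\Omega$, while $dd^cu\wedge T=dd^c(u[L])$ is (up to a positive constant) the Dirac mass at the origin, since $u|_L=\log|z_1|$. The origin has zero relative capacity, so your inequality fails for every $C$, and the Tietze splitting breaks down: the exceptional open set $O$ may be forced to contain a point carrying fixed positive $\mu$-mass, so $\bigl|\int\chi(f-g)\,d\mu\bigr|$ need not be small. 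The difficulty you flag at the end (that $u_j$ may hit $-\infty$ inside $\Omega$) is thus not merely delicate but, combined with the arbitrariness of $T$, fatal for this route.

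The paper's proof sidesteps all of this by a short localization. The only difference between the present statement and \cite[Proposition~4.4]{HH} is that here the $\varphi_i$ live on $\omega$ rather than on all of $\Omega$; the paper simply manufactures global substitutes. Fix balls $B'=B(z,r')\Subset B=B(z,r)\Subset\omega$, set $\psi(\zeta)=|\zeta|^2-r_0^2$ with $r'<r_0<r$, and choose $A>0$ large enough that $\psi_i:=\max(\varphi_i,A\psi)$ equals $\varphi_i$ on $\overline{B'}$ and equals $A\psi$ outside a compact neighborhood of $\overline{B}$; then $\psi_i$ extends to a function in $\PSH\cap L^\infty_{loc}(\Omega)$. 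Now \cite[Proposition~4.4]{HH} applied on $\Omega$ with $\psi_1,\dots,\psi_m$ gives $h(\psi_1,\dots,\psi_m)(dd^cu_j\wedge T)\to h(\psi_1,\dots,\psi_m)(dd^cu\wedge T)$ weakly, hence weakly on $B'$, where $\psi_i=\varphi_i$. Since weak convergence is a local property, the proposition follows on $\omega$. No domination of $dd^cu\wedge T$ by capacity is ever invoked; whatever analytic work is needed is already packaged inside \cite[Proposition~4.4]{HH}.
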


\begin{proof}
Let $B=B(z,r)$ and $B'=B(z,r')$ be  open balls of $\CC^n$ such that $\overline {B'}\subset B\subset \overline B\subset \omega$
and let $\psi$ the psh function on $\Omega$ defined by
$\psi(\zeta)=|\zeta|^2-r_0^2$, where $r_0$ is chosen such that $r'<r_0<r$. Since the functions $\varphi_1,...,\varphi_m$ are bounded on $\overline{B'}$
and $\psi(\zeta)>0$ outside of $B(z,r_0)$ we can find
a constant $A>0$ such that, for every $j=1,\ldots,$,  $\max\{\varphi_j,A\psi\}=\varphi_j$ on $\overline{B'}$ and $\max\{\varphi_j,A\psi\}=A\psi$
outside a compact neighborhood of $\overline B$ in $\omega$. By the sheaf  property of plurisubharmonic functions,
the function defined by $\psi_j=\max\{\varphi_j,A\psi\}$ on $B$ and $\psi_j=A\psi$ on $\Omega\setminus B$ is
a locally bounded psh on $\Omega$. It follows from Proposition 4.4 from \cite{HH} that $h(\psi_1,...,\psi_m)(dd^cu_j\wedge T)|_{B'}\to
h(\psi_1,...,\psi_m)(dd^cu\wedge T)|_{B'}$ weakly  (as currents on $B'$), and hence $h(\varphi_1,...,\varphi_m)(dd^cu_j\wedge T|_\omega)|_{B'}\to
h(\varphi_1,...,\varphi_m)(dd^cu\wedge T|_\omega)|_{B'}$ weakly on $B'$ for we have $h(\psi_1,...,\psi_m)dd^cu_j\wedge T)|_{B'}=h(\varphi_1,...,\varphi_m)(dd^cu_j\wedge T)_\omega)|_{B'}$ for every $j$, and $(h(\psi_1,...,\psi_m)dd^cu\wedge T)|_{B'}=(h(\varphi_1,...,\varphi_m)(dd^cu\wedge T)|_\omega)|_{B'}$. It follows that $h(\varphi_1, ..., \varphi_m)(dd^cu_j\wedge T)|_{\omega} \to h(\varphi_1, ..., \varphi_m)(dd^cu\wedge T)|_{\omega}$
weakly on $B'$.
The property to be proved being local, hence the proof is complete.
\end{proof}

\begin{prop}[{\cite[Proposition 4.6]{NP}}] \label{prop4.5}
Let $\Omega\subset \CC^n$ be an open set, $T$ a closed positive current of bidegree
$(n-1,n-1)$ on $\Omega$ and $\omega$ an open subset of $\Omega$. Let $\varphi, \psi\in \PSH(\omega)$
and $u,v\in \PSH\cap L_{loc}^\infty(\Omega)$ such that $u=v$ on $\cal O=\{\varphi >\psi\}$. Then
$$ dd^cu\wedge T|_{\cal O}=dd^cv\wedge T|_{\cal O}.$$
\end{prop}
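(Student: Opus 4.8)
The plan is to localise the problem to model sets of the form $\{\varphi>c\}$ — as in the proofs of Theorems~\ref{thm1.2} and~\ref{thm3.3} — and then run a Chern--Levine--Nirenberg integration by parts. First, since the assertion is local on $\omega$, it suffices to show that every $z_0\in\cal O$ has a plurifinely open neighbourhood $W\subset\cal O$ on which $dd^cu\wedge T$ and $dd^cv\wedge T$ agree, and then to cover $\cal O$ by countably many such $W$'s. Given $z_0\in\cal O$, so $\psi(z_0)<\varphi(z_0)$, I would choose $c\in\QQ$ with $\psi(z_0)<c<\varphi(z_0)$ and, using the upper semicontinuity of $\psi$, a Euclidean ball $B$ with $\overline B\subset\omega$ on which $\psi<c$. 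Replacing $\varphi$ by $\max(\varphi,c-1)$ — which changes neither $\{\varphi>c\}$ nor $\{\varphi<c\}$ — I may assume $\varphi\in\PSH(B)\cap L^\infty(B)$. Set $W:=\{\varphi>c\}\cap B$. Then $W$ is plurifinely open, $z_0\in W$, and $\{\varphi\ge c\}\cap B\subset\{\varphi>\psi\}=\cal O$ (because $\psi<c\le\varphi$ there), so that $u=v$ on all of $\{\varphi\ge c\}\cap B$.

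Next I would put $g:=\max(\varphi,c)-c$, a bounded nonnegative psh function on $B$ with $\{g>0\}=W$ that vanishes identically on the \emph{Euclidean open} set $\{\varphi<c\}\cap B$. Since $g>0$ on $W$, it is enough to prove the equality of measures $g\,(dd^cu\wedge T)=g\,(dd^cv\wedge T)$ on $B$, i.e. $\int_B\chi g\,(dd^cu\wedge T-dd^cv\wedge T)=0$ for all $\chi\in C_c^\infty(B)$, where $dd^cu\wedge T$, $dd^cv\wedge T$, $dd^cg\wedge T$ are the measures provided by Proposition~\ref{prop4.1}. The core step is an integration by parts which, with $w:=u-v$ (and $dd^cw\wedge T$ abbreviating $dd^cu\wedge T-dd^cv\wedge T$), should take the form
\[ \int_B\chi g\,dd^cw\wedge T=\int_B\chi\,w\,dd^cg\wedge T+\int_B w\,d\chi\wedge d^cg\wedge T-\int_B g\,d\chi\wedge d^cw\wedge T. \]
One then checks that each term on the right vanishes. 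The first does because $dd^cg\wedge T$ is carried by $\{\varphi\ge c\}\cap B$ — indeed $g\equiv0$ on the open set $\{\varphi<c\}$, so $dd^cg\wedge T=0$ there — and $w=u-v\equiv0$ on that set. The second does because the coefficients of the $L^2_{loc}$-current $d^cg$ vanish a.e. on the open set $\{\varphi<c\}$ while $w\equiv0$ pointwise on $\{\varphi\ge c\}\cap B$, so $w\,d^cg=0$ as a current. The third does because $g\equiv0$ off $W$, while on $W$ the function $w=u-v$, which lies in $W^{1,2}_{loc}(B)$ as a difference of bounded psh functions, vanishes; hence $d^cw=0$ a.e. on $W$ (the gradient of a Sobolev function vanishes a.e. on its zero set), so $g\,d^cw=0$. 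Dividing by $g$ on $W$ then yields $dd^cu\wedge T=dd^cv\wedge T$ on $W$, and patching over $z_0$ completes the proof.

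The step I expect to be the main obstacle is justifying the displayed integration-by-parts identity: when $u,v,g$ are only bounded psh and $T$ is an arbitrary, possibly singular, closed positive $(n-1,n-1)$ current, the two cross-terms pair first-order derivatives of non-smooth functions against $T$. The plan there is to approximate $u$, $v$ and $\varphi$ from above by decreasing sequences of smooth psh functions — for which the identity is just Stokes' theorem combined with $dT=0$ — and to pass to the limit using the weak convergence of Proposition~\ref{prop4.3} together with the estimates of \cite[Prop.~4.4]{HH} underlying it, which is exactly the situation those results are designed to handle. Everything else — the support of $dd^cg\wedge T$, the vanishing of $w\,d^cg$ and $g\,d^cw$, and the fact that $\nabla f=0$ a.e. on $\{f=0\}$ for $f\in W^{1,2}_{loc}$ — is elementary, the only structural input being the upper semicontinuity of $\varphi$.
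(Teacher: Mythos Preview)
The paper does not prove this proposition at all: it is quoted verbatim from \cite[Proposition~4.6]{NP} and then used as a black box in the proof of Theorem~\ref{thm4.6}. There is therefore no in-paper argument to compare your proposal against.

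As for your argument itself, the localisation to sets $W=\{\varphi>c\}\cap B$ and the introduction of $g=\max(\varphi,c)-c$ are fine and mirror the proof of Theorem~\ref{thm1.2}. The first right-hand term $\int \chi\,w\,dd^cg\wedge T$ indeed vanishes, since $g\equiv 0$ on the Euclidean open set $\{\varphi<c\}$ forces $dd^cg\wedge T=dd^c(gT)=0$ there, so the measure $dd^cg\wedge T$ is carried by $\{\varphi\ge c\}\cap B$ where $w=0$. The difficulty is with the two cross-terms. You dispose of $\int g\,d\chi\wedge d^cw\wedge T$ by invoking that $w\in W^{1,2}_{\mathrm{loc}}$ and that the gradient of a Sobolev function vanishes Lebesgue-a.e.\ on its zero set; and of $\int w\,d\chi\wedge d^cg\wedge T$ by saying that the coefficients of $d^cg$ vanish Lebesgue-a.e.\ on $\{\varphi<c\}$. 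But these integrals are \emph{not} taken against Lebesgue measure: $T$ is an arbitrary closed positive $(n-1,n-1)$-current whose coefficients may be singular measures supported on Lebesgue-null sets. For such $T$ the object $d^cw\wedge T$ is not a pointwise product of an $L^2_{\mathrm{loc}}$ form with $T$; it is only defined through Bedford--Taylor type identities, and a Lebesgue-a.e.\ vanishing statement about $\nabla w$ gives no control over it. The approximation you suggest (regularise $u,v,\varphi$ and pass to the limit via Proposition~\ref{prop4.3}) does not obviously close this gap either, because the smooth approximants $u_j,v_j$ need not satisfy $u_j=v_j$ on $W$, so the identity you want to pass to the limit is not available at the approximate level.

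In short: the reduction is good, but the mechanism you use to kill the boundary terms relies on Lebesgue-a.e.\ reasoning that is not valid against a singular current $T$. You would need an argument that works at the level of currents (for instance, a Cauchy--Schwarz bound $\lvert d\chi\wedge d^cw\wedge T\rvert^2\le (\cdots)\,dw\wedge d^cw\wedge T$ together with a proof that $dw\wedge d^cw\wedge T\big|_{W}=0$ when $w\equiv 0$ on the plurifinely open set $W$), or else the approach actually taken in \cite{NP}.
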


As a consequence of Proposition \ref{prop4.5} and the proof of Theorem \ref{thm3.3} we have the following

\begin{prop}
Let $\Omega\subset \CC^n$ be an open set and $T$ a closed positive current of bidegree $(n-1,n-1)$.
Assume that $u,v\in \PSH(\Omega)\cap L_{loc}^\infty(\Omega)$ are such that $u=v$ on a Borel measurable $\cal F$-open set $\cal O\subset \Omega$,
then
$$dd^cu\wedge T|_{\cal O}=dd^cv\wedge T|_{\cal O}.$$
\end{prop}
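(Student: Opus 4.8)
The plan is to run the proof of Theorem~\ref{thm3.3} essentially verbatim, with Proposition~\ref{prop4.5} in the role played there by Theorem~\ref{thm1.1}. Fix a countable base $(\omega_j)$ of the Euclidean topology of $\CC^n$ by open balls. Given $z\in\cal O$, the complement $\complement\cal O=\Omega\setminus\cal O$ is plurithin at $z$, so by Proposition~\ref{prop1.1} there is an open set $\sigma_z\ni z$ with $u^*_{(\complement\cal O)\cap\sigma_z}(z)>-1$; picking a base ball $\omega_{j_z}$ with $z\in\omega_{j_z}$ and $\overline{\omega_{j_z}}\subset\sigma_z$ we still have $u^*_{(\complement\cal O)\cap\omega_{j_z}}(z)>-1$ (a smaller defining set gives a larger extremal function), and passing to the relative extremal function computed inside the ball $\omega_{j_z}$ only increases its value at $z$. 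Hence the $\cal F$-open set
$$V_z:=\bigl\{\zeta\in\omega_{j_z}:\bar u^*_{(\complement\cal O)\cap\omega_{j_z}}(\zeta)>-1\bigr\}$$
contains $z$ and is contained in $\omega_{j_z}$. Since $V_z$ depends on $z$ only through $j_z$, the family $\{V_z:z\in\cal O\}$ is countable; writing it as $\{V_{z_j}:j\in J\}$ with $J=\{j_z:z\in\cal O\}\subset\NN$, we get $\cal O\subset\bigcup_{j\in J}V_{z_j}$.

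Two facts about each $V_{z_j}$ are needed, both exactly as in the proof of Theorem~\ref{thm3.3}. First, a relative extremal function equals $-1$ quasi-everywhere on the set from which it is built, so $V_{z_j}\setminus\cal O$ is pluripolar; since a nonempty $\cal F$-open set is never pluripolar, $V_{z_j}\cap\cal O$ is $\cal F$-dense in $V_{z_j}$, and therefore the equality $u=v$ on $V_{z_j}\cap\cal O$ propagates, by $\cal F$-continuity of plurisubharmonic functions, to $u=v$ on all of $V_{z_j}$. Second, $V_{z_j}=\{\varphi_j>-1\}$ with $\varphi_j:=\bar u^*_{(\complement\cal O)\cap\omega_{j_{z_j}}}\in\PSH(\omega_{j_{z_j}})$, which is exactly the shape of the set appearing in Proposition~\ref{prop4.5} (with $\psi\equiv-1$).

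I now apply Proposition~\ref{prop4.5} on the ball $\omega_{j_{z_j}}$ to the restrictions of $u$, $v$ and $T$ there: the restriction of a closed positive $(n-1,n-1)$-current to an open set is again closed and positive, and since $\overline{\omega_{j_{z_j}}}\subset\Omega$ the functions $u$, $v$ are bounded near $\partial\omega_{j_{z_j}}$, so $dd^cu\wedge T$ and $dd^cv\wedge T$ are well-defined measures of locally finite mass there by Proposition~\ref{prop4.1}. With $\cal O$ replaced by $V_{z_j}$, Proposition~\ref{prop4.5} gives $dd^cu\wedge T|_{V_{z_j}}=dd^cv\wedge T|_{V_{z_j}}$ for every $j$. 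Since $dd^cu\wedge T=dd^c(uT)$ is computed locally, the same equalities hold for the globally defined currents; decomposing $\bigcup_jV_{z_j}$ into the countable family of Borel sets $V_{z_j}\setminus\bigcup_{i<j}V_{z_i}$ and using countable additivity yields $dd^cu\wedge T=dd^cv\wedge T$ on $\bigcup_jV_{z_j}$, and restriction to the Borel set $\cal O\subset\bigcup_jV_{z_j}$ finishes the proof.

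As with Theorem~\ref{thm3.3}, I do not expect a genuine new obstacle; the one point that needs attention is the gluing of the local equalities provided by Proposition~\ref{prop4.5} on the various balls into a single equality of measures on $\cal O$, which uses both the locality of $dd^c(uT)$ and the Borel measurability of $\cal O$ (as well as that of the $V_{z_j}$, which are $F_\sigma$ since the $\varphi_j$ are upper semicontinuous).
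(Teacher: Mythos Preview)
Your proof is correct and follows essentially the same approach as the paper: you reproduce the covering argument from the proof of Theorem~\ref{thm3.3} to write $\cal O\subset\bigcup_j V_{z_j}$ with each $V_{z_j}$ of the form $\{\varphi_j>-1\}$ on a ball and $u=v$ on $V_{z_j}$, then invoke Proposition~\ref{prop4.5} on each piece and glue. The paper's own proof is a two-line appeal to that same covering (referring back to the proof of Theorem~\ref{thm3.3}) followed by the same application of Proposition~\ref{prop4.5}; your version is simply more explicit about the measurability and gluing details.
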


\begin{proof}
In the proof of Theorem \ref{thm3.3} we have seen that there is a sequence
$(\omega_j)$ of open subsets of $\CC^n$ such that $\cal O$ is a subset of the union $\bigcup V_j$ of $\cal F$-open sets  $V_j$, $j=1,2,\ldots$, of
$\Omega$ all of the forms $V_j=\{\psi_j>-1\}$, where $\psi_j\in \PSH(\omega_j)$, and such that $u=v$ on each of them.
According to Proposition \ref{prop4.5} we have $ dd^cu\wedge T|_{V_j}=dd^cv\wedge T|_{V_j}$ for every $j=1,2,\ldots$, and
thus $ dd^cu\wedge T|_{\cal O}=dd^cv\wedge T|_{\cal O}.$
\end{proof}

Now we can state the following generalization of Theorem 1.2 from \cite{HH}:

\begin{theorem}\label{thm4.6}
Let $\Omega$ an open set in $\CC^n$, $\omega$ an open subset of $\Omega$,
$T$ a closed positive current on $\Omega$ and $u, v\in \PSH(\Omega)$
bounded near the boundary of $\Omega$. If
$u=v$ on a Borel measurable plurifinely open set $U\subset \Omega$ then
$$dd^cu\wedge T|_U=dd^cv\wedge T|_U.$$
\end{theorem}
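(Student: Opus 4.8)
The plan is to reduce Theorem \ref{thm4.6} to the locally bounded case already handled in the proposition immediately preceding it, exactly mirroring how Theorem \ref{thm3.3} was reduced to Theorem \ref{thm1.1}. The statement is local, so fix a point $z_0\in\Omega$ and an open ball $B$ with $\overline B\subset\Omega$ on which $T$ has finite mass and on which $u$ and $v$ are bounded near $\partial B$ after a harmless modification; it suffices to prove the equality of $dd^cu\wedge T$ and $dd^cv\wedge T$ on $U\cap B$. First I would use the truncations $u_k=\max\{u,-k\}$ and $v_k=\max\{v,-k\}$, which lie in $\PSH(\Omega)\cap L^\infty_{loc}(\Omega)$, decrease to $u$ and $v$ respectively, and still satisfy $u_k=v_k$ on $U$ (since they agree there). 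By the proposition preceding the theorem, applied to the locally bounded pair $(u_k,v_k)$ and the current $T$ on the Borel $\cal F$-open set $U$, we get $dd^cu_k\wedge T|_U=dd^cv_k\wedge T|_U$ for every $k$.

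Next I would upgrade this to an identity of measures weighted by a plurisubharmonic cutoff, so that the passage to the limit $k\to\infty$ is legitimate. Following the pattern in the proof of Theorem \ref{thm1.2}, I would cover $U$ (up to a pluripolar set) by sets of the form $V_j=\{\psi_j>-1\}$ with $\psi_j\in\PSH(\omega_j)$, $\omega_j$ a small ball, as extracted in the remark after Theorem \ref{thm3.3}; on each $V_j$ I would replace $\psi_j$ by a locally bounded psh function (using the sheaf-gluing trick of Proposition \ref{prop4.3}, patching $\max\{\psi_j,A\psi\}$ with a defining function $A\psi$ of a slightly larger ball) so that the weight $\max\{\psi_j,-1\}+1$ is a locally bounded psh function that is strictly positive precisely on $V_j$. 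Then from $dd^cu_k\wedge T|_{V_j}=dd^cv_k\wedge T|_{V_j}$ one obtains the equality of the weighted currents $(\max\{\psi_j,-1\}+1)\,dd^cu_k\wedge T=(\max\{\psi_j,-1\}+1)\,dd^cv_k\wedge T$ globally on $\omega_j$, and now Proposition \ref{prop4.3} (with $m=1$, $h(t)=t+1$ truncated, or more simply $h$ continuous and $\varphi_1=\max\{\psi_j,-1\}$) applies to pass to the limit $k\to\infty$ on both sides, yielding the same weighted identity with $u,v$ in place of $u_k,v_k$. Since the weight is strictly positive on $V_j$, dividing it out gives $dd^cu\wedge T|_{V_j}=dd^cv\wedge T|_{V_j}$, and a pluripolar set carries no mass for $dd^cu\wedge T$ (which has locally finite mass by Proposition \ref{prop4.1}, and whose action is unaffected by pluripolar sets), so summing over $j$ gives $dd^cu\wedge T|_{U\cap B}=dd^cv\wedge T|_{U\cap B}$.

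The main obstacle I anticipate is justifying the limiting step in the right generality. Proposition \ref{prop4.3} is stated for an open set $\omega$ and psh functions bounded near $\partial\Omega$, whereas here $T$ is only a closed positive current of arbitrary bidegree $(p,p)$ with $p<n$ — but since we work on a fixed small ball $B$ on which $T$ has finite mass and on which $u,v$ are locally bounded up to the boundary modification, the hypotheses of Proposition \ref{prop4.3} are met, so the convergence $h(\varphi_1)(dd^cu_k\wedge T|_{\omega_j})\to h(\varphi_1)(dd^cu\wedge T|_{\omega_j})$ holds. One must be slightly careful that the current $dd^cu\wedge T=dd^c(uT)$ is well-defined (Proposition \ref{prop4.1}) and that the weighted-current manipulation commutes with this definition; this is where the argument needs to be written with care, but no genuinely new idea beyond Propositions \ref{prop4.1}, \ref{prop4.3}, \ref{prop4.5} and the covering construction of Theorem \ref{thm3.3} is required. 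A secondary, purely bookkeeping point is checking that the truncations $u_k,v_k$ are bounded near $\partial B$ so that everything stays within the scope of the quoted propositions, which is immediate since $u,v$ are already bounded near $\partial\Omega\supset$ near $\partial B$ (after possibly shrinking $B$).
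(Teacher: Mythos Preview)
Your approach is essentially the paper's own: truncate to $u_k,v_k$, use the covering $U\subset\bigcup_jV_j$ with $V_j=\{\psi_j>-1\}\subset\omega_j$ extracted in the proof of Theorem~\ref{thm3.3}, apply the locally bounded case (Proposition~\ref{prop4.5}) on each $V_j$, multiply by the weight $\psi_j+1$ to get an identity on all of $\omega_j$, pass to the limit $k\to\infty$ via Proposition~\ref{prop4.3}, and divide the weight back out. Two cosmetic remarks: the sheaf-gluing detour is unnecessary since one may take $\psi_j=\bar u^*_{(\complement U)\cap\omega_j}$, which already lies in $\PSH(\omega_j)$ with $-1\le\psi_j\le 0$; and your claim that pluripolar sets carry no mass for $dd^cu\wedge T$ is false in general (take $n=1$, $T=1$, $u=\log|z|$) but also unneeded, since the covering gives $U\subset\bigcup_jV_j$ outright and equality on each $V_j$ already yields equality on $U$.
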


\begin{proof}
Let $u, v\in \PSH(\Omega)$
be bounded near the boundary of $\Omega$ and equal
on a Borel measurable  plurifinely open set $U\subset \Omega$. We have seen in the proof of Theorem \ref{thm3.3}
that there is a sequence of open set $\omega_j\subset \Omega$, a sequence of plurifinely open sets $V_j \subset \omega_j$
of the forms $V_j=\{\psi_j>-1\}(\subset \omega_j)$ for some sequence of functions $\psi_j\in \PSH(\omega_j)$ such that
for every $j$, $-1\leq \psi_j\leq 0$, $u=v$ on $V_j$, and $U\subset \bigcup_j V_j$.
For every integer $k$ let $u_k=\max(u,-k)\in L_{loc}^\infty(\Omega)$ and $v_k=\max(v,-k)\in \L_{loc}^\infty(\Omega)$, then $u_k\searrow u$ and $v_k\searrow v$. Since
$u_k=v_k$ on $V_j$, we have, according to Proposition \ref{prop4.5}, $dd^cu_k\wedge T|_{V_j}=dd^cv_k\wedge T|_{V_j}$ and
hence
$$(\psi_j+1)(dd^cu_k\wedge T)|\omega_j=(\psi_j+1)(dd^cv_k\wedge T)|\omega_j$$
for any $j$ and $k$. By letting $k\to +\infty$ we therefore have
for every $j$ $$(\psi_j+1)(dd^cu\wedge T)|_{\omega_j}=(\psi_j+1)(dd^cv\wedge T)|_{\omega_j}$$
according to Proposition \ref{prop4.3} so that
$$(dd^cu\wedge T)|_{V_j}=(dd^cv\wedge T)|_{V_j}$$
because $0<\psi_j+1\leq 1$ on $V_j$ and $V_j$ is a Borel subset of $\omega_j$. We conclude that
$$(dd^cu\wedge T)|_{\bigcup_jV_j}=(dd^cv\wedge T)|_{\bigcup_jV_j},$$ whence
$$(dd^cu\wedge T)|_U=(dd^cv\wedge T)|_U.$$

\end{proof}


\begin{thebibliography}{00}

\bibitem[Bl]{Bl} Z. Blocki: The domain of definition of the complex Monge-Ampère operator.
\textit{ Amer. J. Math.} {\bf 128} (2006), no. 2, 519--530.

\bibitem[BT]{BT} E. Bedford, B. A.   Taylor: Fine topology,
\v Silov boundary and $(dd^{c})^{n}$,
\textit{J. Funct. Anal.} {\bf 72} (1987), 225--251.

 \bibitem[Ce1]{Ce1} U. Cegrell: The general definition of the Monge-Ampere
 operation, \textit{Ann. Inst. Fourier}, 54, 1 (2004), 159-179.

 \bibitem[Ce2]{Ce2} U. Cegrell: A general Dirichlet problem for the Monge-Amp\`ere
 operator, \textit{Ann. Polon. Math.}, 94 (2008), 131-147.

\bibitem[EK]{EK} M. El Kadiri: Fonctions plurisousharmoniques
et topologie plurifine. \textit{Rend. Accad. Naz. Sci. XL Mem.
Mat. Appl. (5)} 27 (2003), 77--88.

\bibitem[EKFW]{EKFW} M. El Kadiri, B. Fuglede,  J. Wiegerink: Plurifinely psubharmonic functions
and holomorphic functions relaive to the plurifine topology. \textit{J. Math. Anal. Appl.}
{\bf 381} (2011), no 2, 706--723.

\bibitem[EKW]{EKW} M. El Kadiri, J. Wiegerink: Plurifinely psubharmonic functions
and the Monge-Amp\`ere operator, \textit{Potential Anal.} 41 (2014) 469-485.

\bibitem[EMW]{EMW} S. El Marzguioui, J. Wiegerinck: The plurifine topology is
locally connected, \textit{Potential Anal.} 245 (2006) 283--288.

\bibitem[HH]{HH} L. M. Hai, P. H. Hiep: An equality on the complex Monge-Amp\`ere measures.
\textit{J. Math. Anal. Appl.} {\bf 444} (2016), no 1, 503-511.

\bibitem[H]{H} P.H. Hiep: Convergence in capacity and applications, \textit{Math. Scand.},
107 (2010), no 1, 90--102.

\bibitem[Kl]{Kl} M. Klimek, M.:  Pluripotential Theory, Clarendon Press, Oxford, 1991.

 \bibitem[NH]{NP} V. K. Nguyen, P. H. Hiep: A comparison principle for the complex
Monge-Amp\`ere operator in Cegrell's classes and applications, \textit{Trans.
Am. Math. Soc.}, 361. 10 (2009), 5539--5554.

\end{thebibliography}
\end{document}